\newtheorem{pro}{Proposition} 
\newtheorem{thm}[pro]{Theorem}
\newtheorem{clm}[pro]{Claim}
\theoremstyle{definition}
\newtheorem{dfn}[pro]{Definition}
\theoremstyle{remark}
\newcommand{\bdy}{\partial}
\title{On the existence of high index topologically minimal surfaces}
\date{\today}
\address{Pitzer College}
\email{bachman@pitzer.edu}
\author{David Bachman$^1$}
\thanks{$^1$Partially supported by NSF grant DMS-0906151}
\address{Oklahoma State University}
\email{jjohnson@math.okstate.edu}
\author{Jesse Johnson$^2$}
\thanks{$^2$Partially supported by NSF MSPRF grant 0602368}
\begin{document}

\begin{abstract}
The topological index of a surface was previously introduced by the first author as the topological analogue of the index of an unstable minimal surface. Here we show that surfaces of arbitrarily high topological index exist.
\end{abstract}

\maketitle

\markleft{EXISTENCE OF HIGH INDEX TOPOLOGICALLY MINIMAL SURFACES}
\markright{DAVID BACHMAN AND JESSE JOHNSON}



Consider a compact, connected, two sided surface  $S$ properly embedded in a compact, orientable 3-manifold $M$.  The \textit{disk complex} $\Gamma(S)$ is the simplicial complex defined as follows:  Vertices of $\Gamma(S)$ are isotopy classes of compressing disks for $S$. A collection of $n$ such isotopy classes is an $(n-1)$-simplex of $\Gamma(S)$ if there are representatives of each that are pairwise disjoint. 

\begin{dfn}
\label{d:Indexn}
If $\Gamma(S)$ is non-empty then the {\it topological index} of $S$ is the smallest $n$ such that $\pi_{n-1}(\Gamma(S))$ is non-trivial. If $\Gamma(S)$ is empty then $S$ will have topological index 0.  If $H$ has a well-defined topological index (i.e. $\Gamma(S)=\emptyset$ or some homotopy group of $\Gamma(S)$ is non-trivial) then we will say that $S$ is {\it topologically minimal}. 
\end{dfn}

Topological index was introduced by the first author as the topological analogue of the index of an unstable minimal surface \cite{TopIndexI}, and later used in \cite{StabilizationResults} and \cite{AmalgamationResults} to prove various results about Heegaard splittings of 3-manifolds.  Here we show this definition is not vacuous for high index surfaces by proving the following existence result:

\begin{thm}
\label{t:ExistenceTheorem}
There is a closed 3-manifold, $M^1$, with an index 1 Heegaard surface $S$, such that for each $n$, the lift of $S$ to some $n$-fold cover $M^n$ of $M^1$ has topological index $n$. 
\end{thm}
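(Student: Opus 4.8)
The plan is to build a closed $3$-manifold $M^1$ together with a Heegaard surface $S$ whose disk complex $\Gamma(S)$ is non-empty and homotopy equivalent to $S^0$ (that is, two contractible components); by Definition~\ref{d:Indexn} such an $S$ has topological index exactly $1$. Concretely, I would arrange that $M^1$ contains a connected, non-separating, incompressible surface $F$, so that cutting $M^1$ along $F$ produces a compact manifold $N$ having two copies $F_+,F_-$ of $F$ in its boundary, and so that $S$ is compatible with $F$ in the sense of \cite{AmalgamationResults}---roughly, $S$ meets $F$ in curves essential in both surfaces and $S\cap N$ is a (strongly irreducible) Heegaard surface of $N$. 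The manifolds $M^n$ are then the $n$-fold cyclic covers of $M^1$ dual to $F$: $M^n$ is obtained by gluing $n$ copies $N_1,\dots,N_n$ of $N$ in a cyclic chain along their $F$-boundaries, it contains $n$ disjoint lifts $F_1,\dots,F_n$ of $F$, and the lift $\tilde S$ of $S$ meets each $N_i$ in a copy of $S\cap N$. For a suitable choice of $F$ the $n$-fold cover of each handlebody bounded by $S$ is connected, so that $\tilde S$ is again a Heegaard surface of $M^n$; the case $n=1$ is $M^1$ itself.

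The heart of the proof is the computation $\Gamma(\tilde S)\simeq S^{n-1}$. Using the amalgamation structure of $\tilde S$ near the $F_i$ (where it lies in product regions) together with the incompressibility of the $F_i$ in $M^n$, one shows that every compressing disk for $\tilde S$---and, after a simultaneous isotopy, every simplex of $\Gamma(\tilde S)$---can be confined to a single piece $N_i$, where it becomes a compressing disk, respectively a simplex, for the induced splitting $\tilde S\cap N_i$. Since disks lying in distinct pieces are automatically disjoint, this yields the join decomposition
\[
\Gamma(\tilde S)\;\simeq\;\Gamma(\tilde S\cap N_1)*\cdots*\Gamma(\tilde S\cap N_n)\;\simeq\;\underbrace{S^0*\cdots*S^0}_{n}\;=\;S^{n-1},
\]
where each factor $\Gamma(\tilde S\cap N_i)\cong\Gamma(S\cap N)\simeq S^0$ by the choice of $N$---and the same statement with $n=1$ is what certifies that $S$ has topological index $1$. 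This is exactly the ``disk complex of an amalgamation is a join'' principle of \cite{AmalgamationResults}, which I would invoke (and adapt to this cyclic, self-glued situation).

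The index of $\tilde S$ can then be read off from the homotopy groups of spheres: $\pi_k(S^{n-1})=0$ for $0\le k\le n-2$ while $\pi_{n-1}(S^{n-1})=\mathbb Z\neq 0$, so the smallest integer $m$ for which $\pi_{m-1}(\Gamma(\tilde S))$ is non-trivial is precisely $n$. Hence $\tilde S$ has topological index $n$, which proves the theorem.

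I expect the main obstacle to be the explicit construction of the building block together with the rigidity needed for the join computation. One must exhibit $M^1$, $F$, $N$ and the splitting $S\cap N$ so that: (i) $F$ and all of its lifts remain incompressible in every $M^n$; (ii) the splitting $S\cap N$ genuinely satisfies $\Gamma(S\cap N)\simeq S^0$, with each component \emph{contractible} rather than merely non-empty, since extra topology in a single factor would change the join and could alter the index; and (iii) no unexpected ``horizontal'' compressing disks for $\tilde S$ appear in the covers, and disks that are non-isotopic in $N$ stay non-isotopic after the cyclic gluing, so that the $n$ join factors are faithfully reproduced. Verifying these properties for a carefully chosen $N$---for instance one that is itself an amalgamation carrying a strongly irreducible splitting---and making the simultaneous-isotopy step in the join argument precise is where essentially all of the work lies.
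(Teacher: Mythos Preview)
Your architecture---a building block $N$ with two incompressible boundary pieces, glued cyclically to produce $M^n$, with the lifted Heegaard surface assembled from copies of $S\cap N$---is exactly the paper's setup (there $N$ is a two-component two-bridge link exterior and $F$ is a boundary torus). The gap is the central computation. The asserted join decomposition $\Gamma(\tilde S)\simeq\Gamma(\tilde S\cap N_1)*\cdots*\Gamma(\tilde S\cap N_n)$ rests on the claim that every compressing disk for $\tilde S$ can be isotoped into a single $N_i$, and this is false. Once the blocks are glued, the handlebody on each side of $\tilde S$ acquires genuinely new meridian disks that cross the separating surfaces: in the paper's model, cutting $\bigcup C_i^+$ along the ``local'' disks $D_i^+$ yields solid tori $T_i$, each of which straddles two consecutive blocks, and the meridian of $T_i$ is a compressing disk for $\tilde S$ that cannot be confined to one $N_i$. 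There is no join principle in \cite{AmalgamationResults} that circumvents this. Since the handlebodies have genus $\ge 2$ and hence infinitely many disk classes, the most one can hope for by your outline is a \emph{retraction} of $\Gamma(\tilde S)$ onto the octahedral sphere $(S^0)^{*n}$, which yields only the upper bound on the index. The vanishing of $\pi_k(\Gamma(\tilde S))$ for $k<n-1$, which you need for the lower bound, is left entirely unaddressed.

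The paper accordingly treats the two bounds by different mechanisms. For the upper bound it does construct a retraction $\Gamma(S^n)\to P\cong(S^0)^{*n}$: a disk $D$ is sent to $D_i^\pm$ according to the side and location of an \emph{outermost subdisk} of $D$ cut off by the incompressible tori, and Claim~\ref{c:BdyStrongIrreducibility} (boundary-strong-irreducibility of the bridge sphere) is what makes this map simplicial. This is the honest version of your join idea. The lower bound comes from elsewhere entirely: Theorem~\ref{t:OriginalIntersection} (from \cite{TopIndexI}) says an index-$k$ surface can be isotoped so that its pieces relative to the tori are topologically minimal with indices summing to at most $k$; if $k<n$, an Euler-characteristic count then forces some piece to be an essential planar surface with $\chi\ge-2$ inside the link exterior, and the two-bridge link was chosen from the start so that no such surface exists. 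In other words, the lower bound is not a homotopy computation in $\Gamma(\tilde S)$ at all, but a geometric obstruction built into the choice of building block.
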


The manifold $M^1$ of Theorem~\ref{t:ExistenceTheorem} is obtained by gluing together the boundary components of the complement of a link in $S^3$, which we will construct as follows:

We say $S^2 \subset S^3$ is a {\it bridge sphere} for a knot or link $L \subset S^3$ if $L$ meets each of the balls bounded by $S^2$ in a collection of boundary parallel arcs. If the minimum number of such arcs is $b$, then we say $L$ is a {\it $b$-bridge knot/link.}

Throughout the paper, we will assume $L \subset S^3$ is a two component two-bridge link such that for a regular neighborhood $N$ of $L$ the complement $M = S^3 \setminus N$ contains no essential planar surface with Euler characteristic greater than $-3$.

By \cite{ht:85} or \cite{bridge}, such links can be constructed by choosing a sufficiently complicated braid to define the link. Let $S^2$ be a bridge sphere for $L$ that realizes its bridge number, and let $B^\pm$ be the balls in $S^3$ bounded by $S^2$. Let $S = (S^2 \setminus N) \subset M$ and $C^\pm=(B^\pm \setminus N) \subset M$. The manifold $M^1$ of Theorem~\ref{t:ExistenceTheorem} is obtained from $M$ by gluing its boundary components together in such a way so that the surface $S$ glues up to a closed, orientable surface. 

A \textit{boundary compressing disk} for $S$  in $M$ is a disk $D$ with interior disjoint from $S$ and $\partial M$ such that $\partial D$ consists of an essential arc in $S$ and an arc in $\partial M$.  Because the interior of $D$ is disjoint from $S$, the disk $D$ is contained in either $C^-$ or $C^+$.  

\begin{clm}
\label{c:BdyStrongIrreducibility}
Every compressing or boundary compressing disk for $S$ in $C^-$ meets every compressing or boundary compressing disk for $S$ in $C^+$.
\end{clm}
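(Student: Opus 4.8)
The plan is to argue by contradiction: suppose there exist disjoint disks $D^-\subset C^-$ and $D^+\subset C^+$, each either a compressing or a boundary-compressing disk for $S$. Since $S$ is a four-punctured sphere whose four boundary circles are meridians of $L$, I would first catalogue the very limited combinatorial possibilities for the pair $\gamma^\pm:=\partial D^\pm\cap S$. Each $\gamma^\pm$ is either an essential simple closed curve (which must separate the four punctures into two pairs) or an essential arc with endpoints on $\partial S$, and the two are disjoint in $S$. In particular, any two disjoint essential curves in a four-punctured sphere are isotopic, and an essential arc disjoint from such a curve lies in one of the two pairs of pants it cuts off; this is what makes the ensuing case analysis finite.

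Next I would surger $S$ using $D^-$ and $D^+$ simultaneously: compress along a compressing disk and boundary-compress along a boundary-compressing disk. This is legitimate precisely because $\gamma^-$ and $\gamma^+$ are disjoint on $S$. A compression raises Euler characteristic by $2$ and a boundary-compression by $1$, so the resulting (possibly disconnected) planar surface $F$ properly embedded in $M$ satisfies $\chi(F)\in\{0,1,2\}$, and hence has a component of non-negative Euler characteristic. The target is to extract from $F$ an essential planar surface with $\chi>-3$, contradicting the standing hypothesis on $M$. Throughout this step I would freely use that $M$ is irreducible, that $\partial M$ is incompressible, and that no meridian of $L$ bounds a disk in $M$, all of which follow from $L$ being a nontrivial, non-split two-bridge link.

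The cleanest case is when both $D^\pm$ are compressing disks: then $\gamma^-$ and $\gamma^+$ are isotopic to a common curve $c$, compressing $S$ along $D^-$ yields two annuli, each having two meridian boundary curves, the curve $\gamma^+$ becomes a core curve of one of these annuli, and compressing that annulus along $D^+$ produces two embedded disks in $M$ bounded by meridians of $L$ --- which is impossible. The main obstacle is the remaining cases, in which at least one of $D^\pm$ is a boundary-compressing disk. There $\chi(F)$ is smaller and $F$ need not be essential: sphere and disk components of $F$ must be discarded (they bound balls, respectively are boundary-parallel) and the argument re-run on a simpler surface, while a boundary-parallel annulus component, or a compressing or boundary-compressing disk for a component of $F$, has to be isotoped off $D^-$ and $D^+$ and reinterpreted as a disk for $S$ itself. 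The technical heart will be organizing this reduction (using the band-sum structure of the new boundary curves on a single torus component of $\partial M$, and the two-bridge, four-punctured structure of $S$) so that it terminates either in an essential annulus or pair of pants in $M$, contradicting the hypothesis, or in a meridian disk in $M$, contradicting the non-triviality of $L$.
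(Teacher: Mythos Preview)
Your strategy---surger $S$ along both disks and hunt for an essential planar piece of large Euler characteristic---is different from the paper's. The paper never surgers $S$. Instead it first \emph{normalizes}: given any disjoint pair $E_0^\pm$, it trades each for a boundary-compressing disk $E^\pm$ whose arc on $\partial N$ is \emph{essential} in the annulus $\partial N\cap C^\pm$ and which is still disjoint from the other disk. These $E^\pm$ then extend across $N$ to honest \emph{bridge disks} $D^\pm$ for $L$ in $B^\pm$, and the three cases $|D^+\cap D^-|\in\{0,1,2\}$ (intersections only possible at $L\cap S^2$) are each dispatched by elementary bridge-position moves: one intersection point lets you slide two bridges into $S^2$ and drop the bridge number; two intersection points force $L$ to be an unlink or Hopf link; zero intersections produce a splitting sphere. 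The normalization step is the key idea you are missing, and it is precisely what lets the paper avoid ever asking whether a surgered surface is essential.

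Your outline has a concrete error and a genuine gap. The error: in the two-compressing-disks case you assert that after compressing along $D^-$ the curve $\gamma^+$ becomes a \emph{core} of one of the resulting annuli. It does not. Since $\gamma^+$ and $\gamma^-$ are disjoint isotopic separating curves on the four-punctured sphere, $\gamma^+$ is parallel to the $\gamma^-$-boundary of the pair of pants containing it, so after that boundary is capped off $\gamma^+$ bounds a disk; it is \emph{inessential}, and your ``compress the annulus along $D^+$ to get meridian disks'' step collapses. The gap: in the boundary-compression cases you never control whether the arc $\partial D^\pm\cap\partial M$ is essential in its annulus. If it is inessential, boundary-compressing produces, among other things, a disk with meridional boundary; if it is essential, the new boundary slope is no longer meridional and your band-sum bookkeeping changes completely. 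Your proposed inductive clean-up (``discard spheres and boundary-parallel annuli, reinterpret stray disks as disks for $S$'') does not distinguish these cases and gives no reason to believe it terminates in an essential surface rather than simply unwinding back to the original pair $D^\pm$. The paper's normalization step is exactly what eliminates this dichotomy at the outset.
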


\begin{proof}
Assume for contradiction there are disjoint compressing or boundary compressing disks $E_0^- \subset C^-$ and $E_0^+ \subset C^+$ for $S$.  We will replace $E_0^-$ and $E_0^+$ with boundary compressing disks $E^-$, $E^+$ as follows: The intersection $C^- \cap \partial M$ consists of two annuli. If $E_0^-$ is a boundary compressing disk, then one of these two annuli contains an arc of $\partial E_0^-$. If this arc is essential in the annulus that contains it, then we will let $E^- = E_0^-$. 

If this is not the case, or if $E_0^-$ is a compressing disk, then $E_0^- \cap S$ is an arc with both endpoints in the same loop of $\partial S$ or a loop in $S$, respectively. In either case $E_0^- \cap S$ separates $S$ into two components, each of which is an annulus or a pair of pants.  For each of these components, there is a boundary compressing disk for $S$ in $C^-$ that intersects $\partial N \cap C^-$ in an essential arc and intersects $S$ in this component. One of these components contains the arc $E_0^+$.  We will let $E^-$ be the boundary compressing disk for the other component.

Thus we have chosen $E^-$ to be disjoint from $E_0^+$ and so that $E^- \cap \partial N$ is essential in $\partial N \cap C^-$.  A similar construction produces a boundary compressing disk $E^+$ in $C^+$ that is disjoint from $E^-$ and intersects $\partial N \cap C^+$ in an essential arc.

Recall that $M$ is the complement in $S^3$ of a regular neighborhood $N$ of $L$.  We can extend each disk $E^\pm$ to a disk $D^\pm$ in $S^3$ whose boundary consists of an arc in $L$ and an arc in $S^2$.  The disks $D^+$, $D^-$ will either be disjoint or they will intersect in one or two points contained in $L \cap S^2$.

If the two disks intersect in one point in $L \cap S^2$ then we will slide the two arcs of $L$ across these disks into $S^2$.  The resulting link $L'$ is isotopic to $L$ and intersects $S^2$ in a point and an arc.  We can isotope $L'$ so that the arc of intersection becomes a single point.  The resulting link has one bridge with respect to $S^2$, contradicting our assumption that $L$ is 2-bridge.

If the two disks intersect in two points in $L \cap S^2$ then we can again isotope the bridge arcs into $S^2$, so that the resulting link $L'$ is isotopic to $L$ and consists of a one-bridge component and a component contained in $S^2$.  Such a link is either a two component unlink or a Hopf link. In either case $M$ will contain an essential planar surface whose Euler characteristic is at least $-2$. 

Finally, if the two disks are disjoint then the frontier of a regular neighborhood of each $D^\pm$ in $B^\pm$ is a disk $A^\pm$. The loops $\partial A^+$ and $\partial A^-$ are disjoint in the four-punctured sphere $S^2 \setminus L$, and each loop separates two of the punctures from the other two, so they must be parallel.  If we isotope the disks so that their boundaries coincide, the resulting sphere separates $L$ into two one-bridge components, so $L$ is the two-component unlink. This is again a contradiction, as $M$ will contain an essential disk. This final contradiction implies that there are no disjoint disks $E_0^-$, $E_0^+$.
\end{proof}

The intersection of the bridge surface $S$ with $\partial M$ defines a meridianal slope in each component of $\partial M$.  We will also choose an arbitrary longitude for each boundary component.  For each $i \leq n$, let $M_i$ be a 3-manifold homeomorphic to $M$, and let $S_i$ and $C^\pm_i$ be the images of $S$ and $C^\pm$ in $M_i$. Define one component of $L$ to be the \textit{left component} and call the other the \textit{right component}.  Each $M_i$ then has a left and right boundary component inherited from the left and right component, respectively, of $L$.  The surface $S_i$ intersects each boundary component of $M_i$ in a pair of meridians, and each component of $\partial M_i$ inherits a special longitude from $M$.

Let $M^n$ be the result of gluing the right component of each $\partial M_i$ to the left component of $\partial M_{i+1}$, as well as gluing the right component of $\partial M_n$ to the left component of $\partial M_1$.  All gluings should be made so that meridians are sent to meridians and longitudes are sent to longitudes.  Moreover, we will glue so that the surfaces $S_i$ together form a closed surface $S^n \subset M^n$, with $\bigcup C^+_i$ a submanifold of $M$ whose boundary is $S^n$. Because we use the same gluing along each pair of tori, the manifold $M^n$ will be an $n$-fold cyclic cover of the manifold $M^1$. 

\begin{clm}
\label{itsahsurfacelem}
For each $n$, the surface $S^n \subset M^n$ is a genus $n+1$ Heegaard surface.
\end{clm}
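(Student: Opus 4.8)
The plan is to verify in turn that $S^n$ is closed, orientable, and of genus $n+1$, and then that it splits $M^n$ into two handlebodies, hence is a Heegaard surface of the advertised genus.

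\textbf{Euler characteristic.} Since $B^\pm$ is a ball meeting $L$ in two arcs forming a trivial tangle (as $S^2$ is a bridge sphere), $C^\pm = B^\pm \setminus N$ is a genus two handlebody and $S = S^2 \setminus N$ is a four–punctured sphere with $\chi(S) = -2$. The surface $S^n$ is built from $n$ copies $S_1,\dots,S_n$ of $S$ by gluing boundary meridians in pairs, so $\chi(S^n) = n\chi(S) = -2n$; the construction makes $S^n$ connected, and $S^n$ separates the orientable manifold $M^n$ into $\bigcup C^+_i$ and $\bigcup C^-_i$, so $S^n$ is a closed orientable surface of genus $n+1$. As $\bigcup C^+_i$ and $\bigcup C^-_i$ are connected, it remains to show each is a handlebody; by symmetry I treat $\bigcup C^+_i$.

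\textbf{Structure of a single $C^+$.} Because the two arcs of $L\cap B^+$ form a trivial tangle, there is a properly embedded disk $\Delta$ in $B^+$ that is disjoint from $L$, whose boundary is an essential curve in $S$ separating its four punctures into two pairs, and which separates the two arcs of $L\cap B^+$ from each other. Then $\Delta$ is a compressing disk for $S$ in $C^+$ cutting $C^+$ into two pieces $V^+_\ell$ and $V^+_r$, each a ball minus a neighborhood of one boundary–parallel arc, hence each a solid torus, so $C^+ = V^+_\ell \cup_\Delta V^+_r$. The two annuli of $\partial C^+ \cap \partial M$ are $A^+_\ell \subset \partial V^+_\ell$ and $A^+_r \subset \partial V^+_r$, and each is \emph{longitudinal} in its solid torus: refilling the drilled arc attaches a $2$–handle to the solid torus along the core of this annulus and yields the ball $B^+$, which forces the core to be a longitude. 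Transporting this to each $M_i$ gives solid tori $V^+_{\ell,i},V^+_{r,i}$, longitudinal annuli $A^+_{\ell,i},A^+_{r,i}$, and a separating disk $\Delta_i$ with $C^+_i = V^+_{\ell,i}\cup_{\Delta_i} V^+_{r,i}$.

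\textbf{Assembly.} The gluing of $M^n$ sends meridians to meridians, and it was chosen so that $C^+$ is glued to $C^+$; hence it identifies $A^+_{r,i}$ with $A^+_{\ell,i+1}$, and the core of $A^+_{r,i}$ (a meridian of the right torus of $M_i$) with the core of $A^+_{\ell,i+1}$. Thus $W_i := V^+_{r,i}\cup_A V^+_{\ell,i+1}$ is a union of two solid tori along an annulus that is longitudinal in each, and such a union is again a solid torus. After these mergings (indices mod $n$), $\bigcup C^+_i$ is precisely the union of the solid tori $W_1,\dots,W_n$ in which, for each $i$, $W_{i-1}$ and $W_i$ meet along the disk $\Delta_i$. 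A cyclic chain of $n$ solid tori joined by $n$ disks is a handlebody: the corresponding linear chain (using $n-1$ of the disks) is a boundary connect sum of $n$ solid tori, a genus $n$ handlebody, and regluing along the last disk identifies two disks in its boundary, i.e.\ attaches a $1$–handle, giving genus $n+1$. Hence $\bigcup C^+_i$, and likewise $\bigcup C^-_i$, is a genus $n+1$ handlebody, so $S^n$ is a genus $n+1$ Heegaard surface for $M^n$.

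I expect the main obstacle to be the second step: establishing that the two bridge arcs in a ball form a trivial tangle (so that $\Delta$ exists), verifying that the annuli $A^\pm$ are longitudinal, and then keeping careful track in the assembly that it is exactly the $C^+$–annulus of $M_i$ that is matched with the $C^+$–annulus of $M_{i+1}$, cores to cores.
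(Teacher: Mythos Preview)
Your proof is correct and follows essentially the same strategy as the paper: you cut each $C^+_i$ along a separating disk $\Delta_i$ (the paper's $D^+_i$) into two solid tori, observe that the gluing annulus is primitive/longitudinal in each so that adjacent halves merge into solid tori $W_i$ (the paper's $T_i$), and then note that the cyclic regluing along the disks $\Delta_i$ yields a genus $n+1$ handlebody. Your version supplies more detail than the paper---the Euler characteristic computation, the justification that the annuli are longitudinal, and the explicit handlebody count for a cyclic chain of solid tori---but the architecture is identical.
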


\begin{proof}
The intersection of $L$ with $B^+$ is a pair of unknotted arcs. Hence, there is a disk $D^+ \subset B^+$ with $\partial D^+ \subset \partial B^+$ such that $D^+$ separates $B^+$ into two components, each containing an unknotted arc of $L \cap B^+$.  The disk $D^+$ can be chosen disjoint from $N$ so that its image in each $M_i$ is a disk $D^+_i \subset C^+_i$.  Because each component of $B^+ \setminus D^+$ contains an unknotted arc of $L \cap B^+$, the closure of the complement in $C^+_i$ of $D^+_i$ is a pair of solid tori. We will call these the {\it left} and {\it right} solid tori of $\overline{C_i^+ \setminus D^+_i}$, respectively as each meets the left and right boundary component of $M_i$. 

When we glue $M_i$ to $M_{i+1}$, the right solid torus of $\overline{C_i^+ \setminus D^+_i}$ is glued to the left solid torus of $\overline{C_{i+1}^+ \setminus D^+_{i+1}}$ along an annulus that is primitive in each. Hence, these two together form a solid torus, which we denote $T_i$. Similarly, the right solid torus of $\overline{C_n^+ \setminus D^+_n}$ is glued to the left solid torus of $\overline{C_1^+ \setminus D^+_1}$ to form a solid torus $T_n$. 

To reconstruct $\bigcup C_i^+$, for each $i <n$ we glue $T_i$  to $T_{i+1}$ along the disk $D^+_{i+1}$ and we glue $T_n$ to $T_1$ along $D^+_1$. Hence, $\bigcup C_i^+$ is a handlebody of genus $n + 1$.  A similar argument implies that $\bigcup C^-_i$ is also a handlebody.  The two handlebodies intersect along their common boundary $S^n$, so $S^n$ is a Heegaard surface for $M^n$.
\end{proof}

\begin{clm}
\label{nindexequalstoplem}
The surface $S^n$ has topological index at most $n$.
\end{clm}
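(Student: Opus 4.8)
The plan is to prove the sharper statement that $\pi_{n-1}(\Gamma(S^n))\neq 0$; by Definition~\ref{d:Indexn} this immediately gives topological index at most $n$. First I would exhibit an explicit $(n-1)$--sphere inside $\Gamma(S^n)$. For each $i$, pick a compressing disk $D_i^+$ for $S_i$ in $C_i^+$ and a compressing disk $D_i^-$ for $S_i$ in $C_i^-$ — for concreteness, the disks separating the two bridge arcs of $L\cap B^\pm$, carried over to the copy $M_i$. These are compressing disks for $S^n$ as well, since each $\partial D_i^\pm$ is non--separating on $S^n$ (cutting $S^n$ along it leaves the cyclic chain of pieces connected). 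By Claim~\ref{c:BdyStrongIrreducibility}, applied to the copy $M_i\cong M$, the disks $D_i^+$ and $D_i^-$ cannot be isotoped to be disjoint. On the other hand $D_i^\pm$ lies in the interior of $M_i$ — it is disjoint from $\partial M_i$ — so for $i\neq j$ any of $D_i^{\pm}$ and $D_j^{\pm}$ lie in disjoint pieces and are automatically disjoint. Hence the full subcomplex $\Gamma'\subset\Gamma(S^n)$ spanned by the $2n$ vertices $D_1^{\pm},\dots,D_n^{\pm}$ is precisely the $n$--fold join of the two--vertex complexes $\{D_i^+,D_i^-\}$, and so $\Gamma'$ is homeomorphic to $S^{n-1}$.

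The remaining, and harder, step is to show that the inclusion $\Gamma'\hookrightarrow\Gamma(S^n)$ is essential on $\pi_{n-1}$; the natural way to do this is to produce a deformation retraction of $\Gamma(S^n)$ onto $\Gamma'$. The point is that $S^n$ is an amalgamation: the tori $\partial M_1,\dots,\partial M_n$ are incompressible in $M^n$ — since $M$ contains no essential planar surface of Euler characteristic greater than $-3$, in particular no compressing disk for $\partial M$ — the surface $S^n$ meets each of them in essential curves, and $S^n\cap M_i=S_i$ is strongly irreducible in $M_i$, precisely by Claim~\ref{c:BdyStrongIrreducibility} (it is here that the boundary--compression clause of that claim is essential, since $S_i$ has boundary). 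Consequently a compressing disk for $S^n$ may be normalized with respect to the disjoint disks $D_1^+,\dots,D_n^+$, which by Claim~\ref{itsahsurfacelem} cut $\bigcup_j C_j^+$ into solid tori; outermost--arc moves along these disks, together with moves across the mixed simplices of $\Gamma(S^n)$ supplied by disjoint disks on the two sides, flow every vertex of $\Gamma(S^n)$ into $\Gamma'$, and I would organize these moves — invoking the gluing and amalgamation machinery of \cite{TopIndexI} and \cite{AmalgamationResults} — into the desired retraction. The main obstacle is exactly this coherence: one must check that the normalization and outermost--arc surgeries assemble into a genuine continuous deformation retraction rather than a mere map of vertices, and one must accommodate the fact that the $\partial M_i$ glue $M_1,\dots,M_n$ into a cycle rather than a tree, so that the ``wrap--around'' piece is treated on the same footing as the rest.

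Granting that this inclusion is essential, $\pi_{n-1}(\Gamma(S^n))\neq 0$, and therefore by Definition~\ref{d:Indexn} the topological index of $S^n$ is at most $n$.
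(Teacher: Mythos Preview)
Your first step --- exhibiting the $(n-1)$--sphere $\Gamma'$ spanned by the $D_i^\pm$ --- is correct and matches the paper exactly. The gap is in the second step. You propose to build a \emph{deformation retraction} by normalizing compressing disks against $D_1^+,\dots,D_n^+$ and organizing outermost--arc surgeries into a coherent flow; you correctly flag coherence as the obstacle but do not resolve it, and the appeal to \cite{TopIndexI} and \cite{AmalgamationResults} is not made precise. There is also a concrete difficulty with your choice of reference locus: the $D_i^+$ all lie on the positive side of $S^n$, so a negative--side compressing disk is automatically disjoint from all of them and the normalization carries no information --- your ``moves across mixed simplices'' would have to do all the work there, and you give no indication how.

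The paper sidesteps both issues by asking for less and choosing a better reference. Only a \emph{retraction} $F:\Gamma(S^n)\to\Gamma'$ is needed (this already forces the inclusion to inject on $\pi_{n-1}$), and $F$ is defined directly on vertices rather than via a homotopy of moves. The reference is the union of incompressible tori $T=\bigcup\partial M_i$, which meets both handlebodies: given a compressing disk $D$ on, say, the positive side, isotope $\partial D$ to minimize $|\partial D\cap T|$ (canonical via geodesic representatives on $S^n$), eliminate closed curves of $D\cap T$ using incompressibility of $T$, and set $F(D)=D_i^+$ for the smallest $i$ such that some outermost subdisk of $D\setminus T$ lies in $C_i^+$; symmetrically on the negative side. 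Simpliciality of $F$ is then exactly Claim~\ref{c:BdyStrongIrreducibility}: disjoint disks $D,D'$ both producing outermost subdisks in the same $M_i$ yield disjoint compressing or boundary--compressing disks for $S_i$, hence must lie on the same side of $S^n$ and map to the same vertex of $\Gamma'$. Since $\Gamma'$ is a flag complex (any pairwise--adjacent collection $D_{i_1}^{\epsilon_1},\dots,D_{i_k}^{\epsilon_k}$ has distinct indices and so spans a simplex), $F$ extends from the $1$--skeleton to all of $\Gamma(S^n)$ automatically. No coherence argument, no cyclic--versus--tree issue, and no outside machinery is required.
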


\begin{proof}
As in the proof of Claim~\ref{itsahsurfacelem}, the Heegaard surface $S^n$ bounds handlebodies $\bigcup C^+_i$ and $\bigcup C^-_i$, and there is a compressing disk $D^+_i$ for $S^n$ contained in each $C^+_i$ and a second compressing disk $D^-_i$ with interior in $C^-_i$.  For $i \neq j$, the subsets $C^\pm_i$ and $C^\pm_j$ have disjoint interiors so $D^\pm_i$ and $D^\pm_j$ will be disjoint.  If $i = j$ then the disks are either the same or they are on opposite sides of $S_i$, so as noted above they must intersect.  In $\Gamma(S^n)$, there will thus be edges connecting the vertices corresponding to $D^+_i$ and $D^-_i$ to the vertices corresponding to $D^+_j$ and $D^-_j$, if and only if $i \ne j$. Thus the subset $P$ of  $\Gamma(S^n)$ spanned by these vertices is isomorphic to the dual complex of the boundary of an $n$-dimensional cube as in Figure \ref{f:octahedron}.

\begin{figure}[htbp]
\begin{center}
\psfrag{A}{$D_1^+$}
\psfrag{a}{$D_1^-$}
\psfrag{B}{$D_2^+$}
\psfrag{b}{$D_2^-$}
\psfrag{C}{$D_3^+$}
\psfrag{c}{$D_3^-$}
\includegraphics[width=2.5 in]{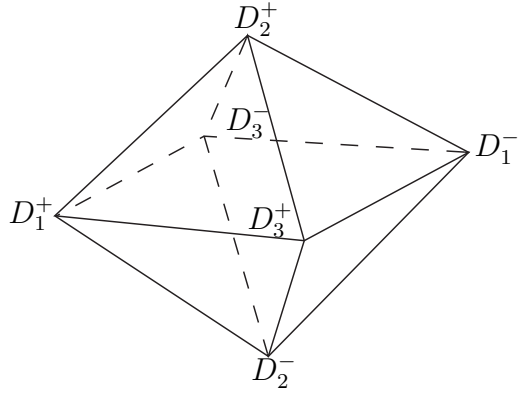}
\caption{The complex $P$ when $n=3$.}
\label{f:octahedron}
\end{center}
\end{figure}

We will define a map $F:\Gamma(S^n) \to P$ as follows:  Let $D$ be a compressing disk for $S^n$ and assume, without loss of generality, that $D$ is contained in $\bigcup C^+_i$.  Let $T$ be the image in $M$ of the torus boundary components of each $M_i$.  Assume that we have isotoped $D$ so as to minimize $\partial D \cap T$.  Because $T \cap S^n$ and $\partial D$ are essential in $S^n$, the result is canonical up to isotopy of $D \cup T$. (To see this, one can fix a hyperbolic metric on $S^n$ and isotope $T \cap S^n$ and $\partial D$ to their geodesic representatives in $S^n$.)  If $D$ is disjoint from $T$ then it is contained in the image of some $M_i$.  In this case, $F$ will send $D$ to $D_i^+$.  

Otherwise, note that $T$ is incompressible in $M$, so any innermost loop of $D \cap T$, bounding a disk in $D$, must also bound a disk in $T$.  Thus we can isotope $D$ so that $D \cap T$ is a collection of arcs.  Let $\alpha$ be an outermost arc which cuts off a disk $E \subset D$ whose interior is disjoint from $T$. Let $\beta \subset \partial D$ be the second arc making up $\partial E$.  If $\beta$ is boundary parallel in $S^n \setminus T$ then the isotopy of $\beta$ into $T$ determines an isotopy of $D$ that reduces $\partial D \cap T$.  (The isotopy produces a loop of $D \cap T$ that can then be removed.)  Thus we can assume that $\beta$ is essential in $S^n \setminus T$.  We will let $F$ send $D$ to $D^+_i$ for the smallest $i$ such that there is such an outermost subdisk $E$ of $D \setminus T$  lying in $C^+_i$.

We will show that $F$ sends the endpoints of each 1-simplex in $\Gamma(S^n)$ to the endpoints of a 1-simplex in $P$. Assume $D$ and $D'$ are disjoint compressing disks for $S^n$ such that $F(D)$ is either $D_i^+$ or $D_i^-$ and $F(D')$ is either $D_j^+$ or $D_j^-$.  If $i \neq j$ then the disks $D_i^\pm$, $D_j^\pm$ are disjoint and the edge from $D$ to $D'$ can be sent to the edge from $D_i^\pm$ to $D_j^\pm$, accordingly.  We need to show that if $i = j$ then $D$ and $D'$ must be on the same side of $S^n$, and thus are both sent to the same vertex of $P$.  

Assume for contradiction $D$ is on the positive side of $S^n$ and $D'$ is on the negative side.  By definition, after an appropriate isotopy fixing their boundaries, $D \cap M_i$ will be a compressing or $\bdy$-compressing disk $E$ for $S_i$ in $M_i$. Similarly, $D' \cap M_i$ will be a compressing or $\bdy$-compressing disk $E'$. As $E$ and $E'$ are on opposite sides of $S_i$, by Claim \ref{c:BdyStrongIrreducibility} they must intersect. But $E$ and $E'$ are subdisks of disjoint disks $D$ and $D'$.  This contradiction implies that $D$ and $D'$ are on the same side of $S^n$. 

Note that whenever the complex $P$ contains the boundary of an $n$-simplex, it also contains its interior. It follows that since $F$ is defined on the 1-skeleton of $\Gamma(S^n)$, and the higher dimensional cells in $\Gamma(S^n)$ are determined by its 1-skeleton, the map $F$ can be extended over the rest of $\Gamma(S^n)$. 

The subcomplex $P$ defines an $(n-1)$-sphere in $\Gamma(S^n)$. The map $F: \Gamma(S^n) \to P$ fixes $P$, and is thus a retraction of $\Gamma(S^n)$ onto an $(n-1)$-sphere. We conclude $\pi_{n-1}(\Gamma(S^n))$ is non-trivial, and thus the topological index of $S^n$ is at most $n$.
\end{proof}

\begin{clm}
\label{c:EulerChar}
The manifold $M$ does not contain any non-trivial, planar, topologically minimal surfaces whose Euler characteristic is at least $-1$. 
\end{clm}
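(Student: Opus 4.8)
The plan is to separate according to whether $P$ is incompressible or compressible: the standing hypothesis on $M$ rules out the first case, and an analysis of the disk complex rules out the second. First note that $M$ is irreducible, since $L$ is non-split, and that $\partial M$ is incompressible, since a compressing disk for $\partial M$ would be an essential planar surface with Euler characteristic $1 > -3$; by hypothesis $M$ also contains no essential annulus ($\chi = 0$) and no essential pair of pants ($\chi = -1$). Now let $P \subset M$ be a non-trivial, planar, topologically minimal surface with $\chi(P) \ge -1$. Then $P$ is a sphere, a disk, an annulus, or a pair of pants, and the feature of this range that drives the proof is that every essential simple closed curve in $P$ is isotopic to a component of $\partial P$.

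Suppose first that $\Gamma(P) = \emptyset$, i.e.\ that $P$ is incompressible. A sphere bounds a ball, and a disk has inessential boundary (otherwise it would compress $\partial M$) and hence is boundary-parallel, so in these cases $P$ is trivial. If $P$ is an annulus or pair of pants that is also $\partial$-incompressible, then, being non-trivial and hence not boundary-parallel, $P$ is essential with $\chi(P) \ge -1 > -3$, contradicting the hypothesis. If instead $P$ is $\partial$-compressible, then after finitely many boundary compressions---each of which raises $\chi$, preserves incompressibility and planarity, and (in the irreducible $M$) carries a boundary-parallel piece back to a boundary-parallel surface---we again obtain an essential planar surface with $\chi > -3$, unless $P$ was boundary-parallel, and hence trivial, to begin with. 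So there is no incompressible example.

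Suppose then that $\Gamma(P) \ne \emptyset$. Since $P$ is topologically minimal, some homotopy group of $\Gamma(P)$ is non-trivial, and I will contradict this by showing that $\Gamma(P)$ is a single simplex, hence contractible. Let $D$ be any compressing disk for $P$; its boundary cobounds an annulus $A \subset P$ with some component $c$ of $\partial P$. An innermost-disk argument, using irreducibility of $M$ and incompressibility of $\partial M$, isotopes $D$ so that $\mathrm{int}(D) \cap \partial M = \emptyset$; then $D \cup A$ is a disk meeting $\partial M$ only in $c$, so $c$ is inessential in $\partial M$, bounds a disk $E \subset \partial M$, and $(D \cup A) \cup E$ bounds a ball $B$. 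Tracking how $P$ meets $B$ shows that, as a vertex of $\Gamma(P)$, the disk $D$ is determined up to isotopy by the pair consisting of the component $c$ and the side of $P$ on which $D$ lies; and for distinct components $c_i$ of $\partial P$ the corresponding disks can be made disjoint using small push-offs of the surfaces $A_i \cup E_i$, provided no two of the disks $E_i$ are nested on a common component of $\partial M$. That nested configuration cannot occur, because capping $P$ along the annulus in $\partial M$ between a nested pair would exhibit $P$ as boundary-parallel, and hence incompressible, contradicting $\Gamma(P) \ne \emptyset$. Thus every finite collection of compressing disks for $P$ can be isotoped to be pairwise disjoint, $\Gamma(P)$ is a simplex, and $P$ is not topologically minimal after all.

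I expect the compressible case to be the main obstacle, and within it the delicate bookkeeping is around the balls bounded by the spheres $(D \cup A) \cup E$: one must check that $P$ does not re-enter such a ball in a way that obstructs the push-offs, and must treat carefully the configurations in which several components of $\partial P$ lie on a single component of $\partial M$. By contrast, the incompressible case is routine, modulo the standard fact that boundary-compressing an incompressible surface in an irreducible $3$-manifold cannot turn a non-boundary-parallel surface into a boundary-parallel one.
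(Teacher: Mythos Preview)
Your incompressible case is handled more carefully than the paper's (the paper simply says a non-essential $F$ ``must be compressible,'' tacitly using that in a manifold with torus boundary an incompressible, $\partial$-compressible surface is a $\partial$-parallel annulus). In the compressible case, however, you take a genuinely different and much harder route. You aim to prove the strong statement that $\Gamma(P)$ is a single simplex, by arguing that each compressing disk is determined by the pair $(c,\text{side})$ and that any two such disks can be realized disjointly. The paper avoids all of this: it invokes McCullough's theorem that the disk complex of a surface on a single side is contractible, and then observes that on a planar surface with $\chi\ge -1$ any two essential simple closed curves can be isotoped to be disjoint. Hence any $+$-disk and any $-$-disk can be made disjoint (boundaries disjoint in $P$, interiors on opposite sides), so $\Gamma(P)=\Gamma^+(P)*\Gamma^-(P)$ is the join of two contractible complexes and is therefore contractible. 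This two-line argument completely sidesteps the ``delicate bookkeeping'' you anticipate: no balls $B$, no tracking of how $P$ re-enters them, no nested $E_i$'s, and no need to prove uniqueness of $D$ given $(c,\text{side})$. Your approach is plausible and likely completable---the key constraint is that $\partial P\subset\partial M$ cannot enter the ball bounded by two competing disks $D_1\cup D_2$, forcing that ball to miss $P$ entirely---but as written the central claims are only asserted, whereas the paper's route trades the whole geometric analysis for a single citation plus the trivial observation about curves on a pair of pants.
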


\begin{proof}
Let $F$ be such a surface. It suffices to show that $F$ is essential, since we have assumed that the complement of $L$ does not contain any such surfaces. 

If $F$ is not essential then it must be compressible. If all compressing disks are on the same side of $F$, then by \cite{mccullough:91}, $\Gamma(F)$ is contractible, a contradiction. Furthermore, there must be vertices that represent compressions on opposite sides of $F$ that intersect, since otherwise $\Gamma(F)$ would be the join of two contractible complexes, and would thus be contractible. In particular, there must be a pair of essential loops on $F$ (the boundaries of these disks) that can not be isotoped to be disjoint. But any two essential loops on a planar surface with Euler characteristic at least $-1$ can be isotoped to be disjoint. The contradiction implies that $M$ does not contain any planar, topologically minimal surfaces of Euler characteristic is at least $-1$.
\end{proof}

To complete the proof, we will need the following Theorem, which is proved in~\cite{TopIndexI}.

\begin{thm}[Theorem 3.2 in~\cite{TopIndexI}]
\label{t:OriginalIntersection}
Let $F$ be a properly embedded, incompressible surface in an irreducible 3-manifold $M$. Let $S$ be a properly embedded surface in $M$ with topological index $n$. Then $S$ may be isotoped so that
	\begin{enumerate}
		\item $S$ meets $F$ in $p$ saddles, for some $p \le n$, and 
		\item the sum of the topological indices of the components of $S \setminus N(F)$ in $M \setminus N(F)$, plus $p$, is at most $n$. 
	\end{enumerate}
\end{thm}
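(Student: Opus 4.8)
This is the combinatorial shadow of the min--max principle that an index-$n$ minimal surface meeting a stable one can be isotoped so that its ``index budget'' $n$ is distributed between a bounded number of tangencies and the indices of the complementary pieces, and the plan is to imitate this. Fix a product neighborhood $N(F)\cong F\times[-1,1]$, write $F_t=F\times\{t\}$, and let $W$ denote $M$ cut along $F$. Since $M$ is irreducible and $F$ is incompressible, $W$ is irreducible and the copies of $F$ in $\partial W$ are incompressible in $W$. First put $S$ in \emph{efficient position}: isotope $S$ so that it is transverse to $F_t$ for all but finitely many $t$, with a single non-degenerate saddle tangency at each exceptional level, and so that for every regular level $t$ no circle of $S\cap F_t$ bounds a disk in $F_t$ disjoint from $S$ or a disk in $S$ disjoint from $F_t$ (and likewise for $S\cap F_{\pm 1}$). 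Such positions exist generically, and trivial circles are removed by the usual innermost-disk exchanges, using incompressibility of $F$ and irreducibility of $M$. Among all efficient positions choose one for which the number $p$ of saddles is minimal. Then the components $S_1,\dots,S_k$ of $S\setminus N(F)$ are properly embedded in $W$ with incompressible boundary, and $S\cap N(F)$ is a union of vertical annuli together with $p$ saddle pieces.

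The heart of the argument is the structural claim that, in this position,
\[
\Gamma(S)\ \simeq\ \Gamma(S_1)*\cdots*\Gamma(S_k)*S^{\,p-1},
\]
with the conventions $S^{-1}=\emptyset$ and $X*\emptyset=X$. There are two ingredients. The first is disjointness from $F$: a compressing disk for $S$ disjoint from $N(F)$ lies in a single piece $S_i$, and conversely any compressing disk for $S$ can be isotoped toward minimal position with respect to $F$ and then, after surgering along outermost arcs of its intersection with $F\times[-1,1]$ exactly as in the proof of Claim~\ref{nindexequalstoplem}, rewritten in terms of compressing disks of the $S_i$; this yields a deformation retraction of the subcomplex of disks avoiding the saddle slabs onto $\Gamma(S_1)*\cdots*\Gamma(S_k)$. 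The second is that each saddle behaves like a stabilization: pushing the saddle piece at level $t_j$ slightly to either side of $S$ produces a compressing disk $D_j^+$ on the positive side and $D_j^-$ on the negative side which meet in a single point, are disjoint from the disks $D_{j'}^{\pm}$ for $j'\neq j$ (the saddles occur at distinct levels) and from every piece-disk; thus the saddles contribute a join factor $\{D_1^+,D_1^-\}*\cdots*\{D_p^+,D_p^-\}=S^{\,p-1}$. Since $\Gamma(S)$ is a flag complex, the retraction extends over all higher simplices once it is defined on vertices and edges.

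Granting the structural claim, the two bounds are bookkeeping with connectivity of joins. A non-empty disk complex has topological index $m$ precisely when it is $(m-2)$-connected but not $(m-1)$-connected, i.e.\ its first non-trivial homotopy group is $\pi_{m-1}$; an index-$0$ complex is empty. If some $S_i$ were not topologically minimal then $\Gamma(S_i)$ would be non-empty and weakly contractible, hence contractible, forcing the whole join---and therefore $\Gamma(S)$---to be contractible, which is impossible since $\pi_{n-1}(\Gamma(S))\neq 0$. So each $S_i$ is topologically minimal of some index $n_i$, and $S^{\,p-1}$ has index $p$. Using $S^a*S^b=S^{a+b+1}$ together with the standard fact that the join of two complexes whose first non-trivial homotopy groups lie in dimensions $d_1$ and $d_2$ has its first non-trivial homotopy group in dimension $d_1+d_2+1$, it follows that $\Gamma(S)$ has topological index $\sum_i n_i+p$. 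As this index is $n$, we get $p\le n$, which is (1), and $\sum_i n_i+p = n\le n$, which is (2).

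The main obstacle is the structural claim of the middle paragraph, and within it the assertion that the retraction can be built over \emph{all} of $\Gamma(S)$: one must control compressing disks that meet $F$ essentially and interact with several saddles simultaneously, and check that the assignment of such a disk to a simplex of the model complex is well defined and simplicial. The template for these ``isotope to a canonical position, then retract'' arguments is \cite{mccullough:91} and the proof of Claim~\ref{nindexequalstoplem} above; carrying it out in the presence of saddles is where the bulk of the work in \cite{TopIndexI} should lie.
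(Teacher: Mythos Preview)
The paper does not prove this theorem. Immediately before the statement it says ``To complete the proof, we will need the following Theorem, which is proved in~\cite{TopIndexI},'' and then simply quotes the result. There is therefore no proof in this paper to compare your proposal against; the theorem is imported as a black box from \cite{TopIndexI}.

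That said, your sketch contains a genuine gap worth flagging. The structural claim $\Gamma(S)\simeq \Gamma(S_1)*\cdots*\Gamma(S_k)*S^{p-1}$ hinges on the assertion that each saddle tangency of $S$ with $F$ produces a pair of compressing disks $D_j^\pm$ for $S$, one on each side, meeting in a single point. But a saddle is a tangency between $S$ and the incompressible surface $F$; it is not a stabilization of $S$ and does not, by itself, furnish any compressing disk for $S$ at all. In local coordinates $S$ looks like $z=x^2-y^2$ and $F$ like $z=0$; nothing here bounds an essential disk in the complement of $S$. So the $S^{p-1}$ join factor has no source, and the decomposition collapses. A secondary symptom is that your argument would yield the \emph{equality} $\sum_i n_i + p = n$, whereas the theorem only asserts an inequality; this already suggests that the actual mechanism is a min--max/thin-position argument over isotopies of $S$ relative to $F$ (as you allude to in your first sentence), rather than a rigid homotopy-type identification of $\Gamma(S)$.
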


\begin{clm}
The surface $S^n$ has topological index at least $n$.
\end{clm}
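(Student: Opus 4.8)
The plan is to establish the matching lower bound, so that combined with Claim~\ref{nindexequalstoplem} (and Claim~\ref{itsahsurfacelem}) the surface $S^n$ has topological index exactly $n$, which is what Theorem~\ref{t:ExistenceTheorem} asserts. I would argue by contradiction: suppose the topological index of $S^n$ is some $m \le n-1$, and apply Theorem~\ref{t:OriginalIntersection} with $F$ the union $T_1 \cup \cdots \cup T_n$ of the gluing tori. First I would record the properties this requires. Since $L$ is a non-trivial $2$-bridge link, $M$ is irreducible with incompressible boundary; as $M^n$ is built from copies of $M$ glued along incompressible tori, $M^n$ is irreducible and $F$ is incompressible in it. I would also observe that \emph{every} position of the Heegaard surface $S^n$ meets each $T_i$: otherwise $T_i$ would be a closed incompressible torus inside one of the genus-$(n+1)$ handlebodies bounded by $S^n$, impossible because the fundamental group of a handlebody is free. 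Finally, after an isotopy we may assume $S^n \cap F$ has no curve bounding a disk in $F$ or in $S^n$, and (using that $M$ contains no essential annulus and no meridian disk, so that every annulus in $M$ with meridian boundary is incompressible and hence $\partial$-parallel) that no component of $S^n \setminus N(F)$ is a $\partial$-parallel annulus; such reductions do not increase the quantities appearing in Theorem~\ref{t:OriginalIntersection}.

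By Theorem~\ref{t:OriginalIntersection}, after an isotopy $S^n$ meets $F$ in $p \le m$ saddles, and $p$ plus the sum of the topological indices of the components of $S^n \setminus N(F)$ is at most $m$. The crux is to show this sum is already at least $n$, contradicting $n > m$. For this I would show each $M_i$ contains a component of $S^n \setminus N(F)$ of topological index at least $1$. The first step is that each $M_i$ contains a \emph{non-trivial} component, i.e. one which is not a $\partial$-parallel annulus: if every component of $S^n$ lying in $M_i$ were such an annulus, isotoping these away would make $S^n$ disjoint from the interior of $M_i$, hence (after a small push) disjoint from $T_i$, contradicting the fact that $S^n$ meets every $T_i$.

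Next I would identify these non-trivial components. Since the components of $S^n \setminus N(F)$ produced by Theorem~\ref{t:OriginalIntersection} are themselves topologically minimal, Claim~\ref{c:EulerChar} applies, so a non-trivial planar component has Euler characteristic at most $-2$; ruling out the remaining (higher-genus) pieces via the structure of incompressible surfaces in a $2$-bridge link complement, every non-trivial component has $\chi \le -2$. Because $\chi(S^n) = -2n$ is the sum of the Euler characteristics of all the components, and each of the $n$ regions $M_i$ contains at least one non-trivial component, there is exactly one per $M_i$, it has $\chi = -2$, and (by uniqueness of the four-punctured sphere of meridian slopes in $M$) it is isotopic to the bridge sphere $S_i$. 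Finally, $S_i$ has compressing disks on both sides of $S^n$ coming from the handlebody structure of $C^\pm_i$, so $\Gamma(S_i) \ne \emptyset$; by Claim~\ref{c:BdyStrongIrreducibility} no compressing disk on one side of $S^n$ is disjoint from any compressing disk on the other, so there is no edge of $\Gamma(S_i)$ joining a positive-side vertex to a negative-side vertex, and $\Gamma(S_i)$ is disconnected. Hence $\pi_0(\Gamma(S_i))$ is non-trivial and $S_i$ has topological index exactly $1$. Summing over $i$ gives the required contradiction, so $S^n$ has topological index at least $n$.

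The step I expect to be the main obstacle is the identification of the non-trivial pieces with the bridge spheres $S_i$. The Euler characteristic count must exclude non-planar pieces (one- and two-holed tori) and planar pieces other than four-punctured spheres, and it must then be shown that a four-punctured piece is not an incompressible (hence index $0$) four-punctured sphere — note our standing hypothesis only forbids \emph{essential} planar surfaces with $\chi > -3$, so an incompressible but $\partial$-compressible four-punctured sphere is not excluded outright and must be handled either by iterating Claim~\ref{c:EulerChar} on its $\partial$-compressions (which are planar of Euler characteristic $\ge -1$) or by directly appealing to the classification of incompressible and essential surfaces in $2$-bridge link complements. This is precisely where the careful hypothesis imposed on $L$ is used.
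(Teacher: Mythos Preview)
Your setup matches the paper's: apply Theorem~\ref{t:OriginalIntersection} to $S^n$ and the union of the gluing tori, clean up inessential intersections, and run an Euler-characteristic count to see that each block $M_i$ carries a piece with $\chi=-2$ and at least four boundary circles. The divergence is in the endgame, and that is where your argument has a real gap.

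You try to show that \emph{every} block contributes index at least $1$ by first identifying the piece in $M_i$ with the bridge sphere $S_i$ and then computing the index of $S_i$. The identification step is not justified: your appeal to ``uniqueness of the four-punctured sphere of meridian slopes in $M$'' is asserted, not proved, and your fallback of invoking the classification of incompressible surfaces in a $2$-bridge link complement does not apply, since the pieces produced by Theorem~\ref{t:OriginalIntersection} are only known to be topologically minimal, not incompressible. Your proposed alternative of iterating Claim~\ref{c:EulerChar} on the $\partial$-compressions is circular for the same reason. You yourself flag this as ``the main obstacle,'' and it is not resolved.

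The paper avoids this detour entirely. Rather than pushing each block up to index $\ge 1$, it observes that since the sum of indices over all components is $<n$ and there are $n$ blocks, some block $M_i$ has every component of index $0$, i.e.\ incompressible. The missing ingredient in your write-up is then a one-line fact you never invoke: because $\partial M$ consists of tori, any properly embedded $\partial$-compressible surface in $M$ is either compressible or a $\partial$-parallel annulus. Hence an index-$0$ component that is not an annulus is automatically essential. Since $S'_i$ has $\chi=-2$ and at least four boundary circles, it has a planar component with $\chi\ge-2$; that component is therefore an essential planar surface with $\chi>-3$, directly contradicting the standing hypothesis on $L$. No identification with $S_i$, no uniqueness statement, and no classification of incompressible surfaces is needed.

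In short: your strategy can be salvaged, but only by inserting exactly the torus-boundary observation above, at which point the attempt to identify the piece with $S_i$ and compute its index becomes superfluous. The paper's route is both shorter and avoids the unjustified steps.
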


\begin{proof}
Assume for contradiction $S^n$ has topological index $k < n$.  As in the proof of Claim \ref{nindexequalstoplem}, let $T$ be the image of $\bigcup \partial M_i$ in $M^n$.  Let $U$ be a regular neighborhood of $T$.  By Theorem \ref{t:OriginalIntersection}, we can isotope $S^n$ to a surface $S'$ such that each component of $S' \setminus U$ is a topologically minimal surface and the indices of these surfaces sum to at most $k$.  

Suppose some loop of $S' \cap \partial \overline{U}$ is trivial in $\partial \overline{U}$. Then an innermost such loop either bounds a disk in $S' \setminus U$, which can be isotoped into $U$, or  is parallel to a compressing disk for a component of $S'  \setminus U$.  In the latter case, this compressing disk is disjoint from all other compressing disks for $S' \setminus U$, which implies that the disk complex for $S' \setminus U$ is contractible.  This contradicts the fact that $S' \setminus U$ is topologically minimal.  Thus $S' \cap \partial \overline{U}$ must consist entirely of essential loops.

For each $i$, let $S'_i = S' \cap (M_i \setminus U)$. As $S'$ is separating, it must meet each component of $\partial \overline{U}$ in at least two loops. Thus, each surface $S'_i$ has at least four boundary components. 

Note that $S^n$ was a union of $n$ four-punctured spheres, each of which has Euler characteristic $-2$. Thus, the Euler characteristic of $S^n$, and thus $S'$, is $-2n$. If the Euler characteristic of $S'_i$ is not $-2$ for some $i$, then for some $S_j$, the Euler characteristic must be at least $-1$. But such a surface that has at least four boundary components must have at least one planar component. Since, by Theorem \ref{t:OriginalIntersection}, each component of $S'_i$ is topologically minimal, we thus violate Claim \ref{c:EulerChar}. 

We conclude that for each $i$, $\chi(S'_i)=-2$. By Theorem \ref{t:OriginalIntersection}, the sum of the indices of the components of $S' \setminus U$ is at most $k < n$. Thus, there is some $i$ such that the topological index of $S'_i$ is zero. Because the boundary of $M$ consists of tori, every boundary compressible surface is either compressible or a boundary parallel annulus.  The surface $S'_i$ cannot be a boundary parallel annulus, so every component of $S'_i$ must be essential. Since $S'_i$ has at least four boundary components, it must have a planar component whose Euler characteristic is at least $-2$. This violates our assumption that $L$ was chosen so that $M$ does not contain any such surfaces. This contradiction implies that $S^n$ must have index at least $n$.
\end{proof}

We have thus established that the topological index of $S^n$ is precisely $n$.  Since $n$ was an arbitrary integer, this proves Theorem \ref{t:ExistenceTheorem}.

\bibliographystyle{alpha}

\end{document}